\documentclass[10pt]{article}
\usepackage{epsfig}
\usepackage{amssymb,amsmath,amsthm,url}
\usepackage{multirow}
\usepackage{booktabs}
\usepackage{natbib}
\usepackage{hyperref}
\usepackage{setspace}
\usepackage{dsfont}

\usepackage{times,amsfonts,eucal}
\usepackage{graphicx,ifthen}
\usepackage{wrapfig}
\usepackage{latexsym}
\usepackage{color}
\usepackage{mathrsfs}
\usepackage{bm}
\usepackage{bbm}
\usepackage{srctex}

\pagetotal=29.7cm \textwidth=16.8cm \textheight=23.2cm
\topmargin=-1.8cm \headheight=0.3cm \headsep=1.6cm
\oddsidemargin=0cm \evensidemargin=0cm \arraycolsep=2pt
\columnsep=0.60cm

% settings
%\pubyear{0000}
%\volume{0}
%\issue{0}
%\firstpage{1}
%\lastpage{8}
%----------------------------------------------------------
%Mathematical Definitions and Environments
%----------------------------------------------------------
\usepackage{mathtools}
\DeclarePairedDelimiter{\floor}{\lfloor}{\rfloor}

%----------------------------------------------------------
%----------------------------------------------------------------------
%set environments and ams styles
\theoremstyle{plain}
\newtheorem{theorem}{Theorem}[section]
\newtheorem{corollary}[theorem]{Corollary}

\newtheorem{proposition}[theorem]{Proposition}

%-----------------------------------------------------------
\theoremstyle{definition}
\newtheorem{definition}[theorem]{Definition}

%-----------------------------------------------------------
\theoremstyle{remark}

%-----------------------------------------------------------
\numberwithin{equation}{section}

%----------------------------------------------------------------------
%----------------------------------------------------------------------
%BB mode and cal mode
\usepackage{bbm}
\newcommand{\N}{\mathbbm{N}}
\newcommand{\R}{\mathbbm{R}}
\newcommand{\Z}{\mathbbm{Z}}

\newcommand{\p}{\mathbbm{P}}
\newcommand{\cA}{\mathcal{A}}
\newcommand{\cB}{\mathcal{B}}
\newcommand{\cG}{\mathcal{G}}
\newcommand{\cF}{\mathcal{F}}
\newcommand{\cH}{\mathcal{H}}

\newcommand{\cO}{\mathcal{O}}
%----------------------------------------------------------------------
%----------------------------------------------------------------------
%frak mode
\newcommand{\fS}{\mathfrak{S}}
\newcommand{\fT}{\mathfrak{T}}
%----------------------------------------------------------------------
%----------------------------------------------------------------------
\newcommand{\E}[1]{\mathbbm{E}\left [ \, #1 \, \right ]}

\renewcommand{\epsilon}{\varepsilon}
\renewcommand{\phi}{\varphi}
\newcommand{\pspace}{(\Omega,\cA,\p)}
\newcommand{\intd}[1]{\,\mathrm{d}#1}
\newcommand{\norm}[1]{\left\lVert #1 \right\rVert}
\newcommand{\scalar}[2]{\left\langle #1,#2 \right\rangle}
\newcommand{\1}[1]{\,\mathbbm{1}\! \left\{ #1 \right\} }

%----------------------------------------------------------
\allowdisplaybreaks
\setlength{\parindent}{0pt} 
\linespread{1.1}

\begin{document}

\title{A Large Deviation Inequality for $\beta$-mixing Time Series and its Applications to the Functional Kernel Regression Model\footnote{This research was supported by the Fraunhofer ITWM, 67663 Kaiserslautern, Germany, which is part of the Fraunhofer Gesellschaft zur F{\"o}rderung der angewandten Forschung e.V.}
}
\author{Johannes T. N. Krebs\footnote{Department of Mathematics, University of Kaiserslautern, 67653 Kaiserslautern, Germany, email: \tt{krebs@mathematik.uni-kl.de} }
}
\date{\today}
\maketitle
\linespread{1.1}
\begin{abstract}
\setlength{\baselineskip}{1.8em}
%% Text of abstract
We give a new large deviation inequality for sums of random variables of the form $Z_k = f(X_k,X_t)$ for $k,t\in \N$, $t$ fixed, where the underlying process $X$ is $\beta$-mixing. The inequality can be used to derive concentration inequalities. We demonstrate its usefulness in the functional kernel regression model of \cite{ferraty_nonparametric_2007} where we study the consistency of dynamic forecasts. \medskip\\
\noindent {\bf Keywords:} Asymptotic inference; Asymptotic inequalities; $\beta$-mixing; Bernstein inequality; Nonparametric statistics; Time Series\\
\noindent {\bf MSC 2010:} Primary: 62G20;	62M10; 37A25; Secondary:  62G09
\end{abstract}

%% main text
\section{Introduction}~\label{Introduction}

In this article we study asymptotic bounds for the probability 
\begin{align}\label{EqBasicProbability}
\p\left( n^{-1} \left|\sum_{k=1}^n f(X_k,X_t) - \E{ f(X_0,x)} \big|_{x=X_t} \right| \ge \epsilon \right) \text{ where } t \in\Z
\end{align}
for a real-valued function $f$ and for a stationary stochastic process $\{X_k:k\in\Z\}$ which takes values in a state space $(S,\fS)$. If the function $f$ is bounded by $B>1$, we obtain for a certain constant $c$ an exponential rate of convergence for \eqref{EqBasicProbability} which is in $ \cO\left( \exp( - c \epsilon n / (B \log n \log \log n) \,) \right)$. So modulo a logarithmic factor which comes from the dependence in the data, the rate corresponds to the rates of classical large deviations inequalities for independent random variables.\\
Large deviation inequalities are a major tool for the asymptotic analysis in probability theory and statistics. One of the first inequalities of this type was published by \cite{bernstein1927} who considers the case $\p( |S_n| > \epsilon)$, where $S_n=\sum_{k=1}^n X_k$ for bounded real-valued random variables $X_1,\ldots,X_n$ which are i.i.d.\ and have expectation zero. There are various versions and generalizations of Bernstein's inequality, e.g., \cite{hoeffding1963probability}. In particular, deviation inequalities for dependent data such as stochastic processes are nowadays important: Bernstein inequalities for time series are developed in \cite{carbon1983}, \cite{collomb1984proprietes}, \cite{bryc1996large} and \cite{merlevede2009}. \cite{arcones1995bernstein} develops Bernstein-type inequalities for $U$-statistics. \cite{valenzuela2017bernstein} give a further generalization to strong mixing random fields $\{X_s: s\in\Z^N\}$ which are defined on the regular lattice $\Z^N$ for some lattice dimension $N \in\N_+$. A similar version for independent multivariate random variables is given by \cite{ahmad2013probability}. \cite{krebs2017_BernsteinExpGraph} gives a Bernstein inequality for strong mixing random fields which are defined on exponentially growing graphs. The corresponding definitions of dependence and their interaction properties can be found in \cite{doukhan1994mixing} and in \cite{bradley2005basicMixing}. Bernstein inequalities often find their applications when deriving large deviation results or (uniform) consistency statements in nonparametric regression and density estimation, compare for instance \cite{gyorfi}.\\
Consider again Equation~\eqref{EqBasicProbability} where $X$ is a strong mixing process and $t\in\Z$ is a fixed integer. In the special case that the aggregating function $f(x,y) = g(x)h(y)$ is multiplicative, the statement follows from the large deviation inequalities for strong mixing processes given e.g. in \cite{merlevede2009}. However, for a general aggregating function which does not allow a similar decomposition, we face the problem that the process given by $Z_k = f(X_k,X_t)$ is not necessarily strong mixing any more.\\
Thus, we need further assumptions on $X$ which ensure that the contribution of a single summand $f(X_k,X_t)$ is asymptotically negligible in order to derive the necessary bounds. We show that for $\beta$-mixing processes there exist such bounds under certain conditions which are quite likely to be fulfilled in practice.\\
In this manuscript we also give a natural motivation for the new large deviation inequality for~\eqref{EqBasicProbability} and we demonstrate its power in a non-trivial application to the functional kernel regression model. Here the regression operator, which maps from an infinite-dimensional space, e.g., a Hilbert space, to the real numbers, is estimated with a compactly supported kernel function. The literature on this topic is extensive and we refer the reader to \cite{ferraty_nonparametric_2007}, \cite{delsol2009advances} and \cite{ferraty_validity_2010} for a general introduction.\\
This paper is organized as follows: we give the definitions in Section~\ref{Section_DefinitionsNotation}. In Section~\ref{Section_ExponentialInequalities} we present the new exponential inequalities for $\beta$-mixing time series. In Section~\ref{Section_Application} we apply the exponential inequalities in the nonparametric functional regression model of \cite{ferraty_nonparametric_2007} and investigate dynamic forecasts in this model.

\section{Definitions and Notation}\label{Section_DefinitionsNotation}
In this section we give the mathematical definitions and notation which we shall use to derive the results. We first introduce the $\alpha$- and $\beta$-mixing coefficients. The first coefficient is due to \cite{rosenblatt1956central}, the latter was introduced by \cite{kolmogorov1960strong}:
\begin{definition}[$\alpha$- and $\beta$-mixing coefficient]\label{DefMixingCoeff}
Let $\pspace$ be a probability space. Given two sub-$\sigma$-algebras $\cF$ and $\cG$ of $\cA$, the $\alpha$-mixing coefficient is defined by
\begin{align*}
		\alpha(\cF,\cG) \coloneqq \sup \left\{  \left| \p(A\cap B)-\p(A)\p(B)		\right|: A\in\cF, B\in\cG \right\}.
\end{align*}
It is well-known that $\alpha(\cF,\cG) \le 1/4$. The $\beta$-mixing coefficient is defined by
\begin{align*}
		\beta(\cF,\cG) \coloneqq \frac{1}{2} \sup \left\{ \sum_{i\in I} \sum_{j\in J} \big|\p(U_i\cap V_j) - \p(U_i)\p(V_j) \big| \,:\,  (U_i)_{i\in I}, (V_j)_{j\in J} \text{ are finite partitions of } \Omega \right\}.
\end{align*}
Then $2\alpha(\cF,\cG)\le\beta(\cF,\cG) \le 1$, compare to \cite{bradley2005basicMixing}. If $X$ and $Y$ are two random variables on $\pspace$, then we denote by $\alpha(X,Y)$ the mixing coefficient $\alpha(\sigma(X), \sigma(Y) )$ and by $\beta(X,Y)$ the mixing coefficient $\beta( \sigma(X), \sigma(Y) )$. Furthermore, if $(X_k: k\in \Z)$ is a stochastic process, then we write for $n\in\N$
\begin{align*}
		\alpha(n) \coloneqq \sup_{k \in \N} \alpha\big( \sigma( X_s: s\le k ), \sigma( X_s: s\ge k
+n) \big) \text{ and } \beta(n) \coloneqq \sup_{k \in \N} \beta\big( \sigma( X_s: s\le k ), \sigma( X_s: s\ge k
+n) \big).
\end{align*}
The stochastic process $X$ is said to be strong mixing (or $\alpha$-mixing) if $\alpha(n)\rightarrow 0$ ($n\rightarrow \infty$). Furthermore, $X$ is $\beta$-mixing if $\beta(n)\rightarrow0$ ($n\rightarrow \infty$).
The $\beta$-mixing coefficient of two random variables $X,Y$ on $\pspace$ is related to the total variation norm as (cf. \cite{doukhan1994mixing})
\begin{align}\label{BetaMixingTotalVariation}
\beta(X,Y) = \norm{ \p_{X\otimes Y} - \p_X \otimes \p_Y }_{TV} = \sup_{A \in \cA} | \p_{X\otimes Y} (A) - \p_X \otimes \p_Y(A) |.
\end{align}
\end{definition}
Moreover, we denote for a set $A\in\cA$ by $\1{A}$ the indicator function on $A$. If not stated otherwise, we agree on the convention to abbreviate constants by $C$ in the proofs. In the following we derive inequalities of the Bernstein-type for $\beta$-mixing stochastic processes.

\section{Exponential inequalities for \texorpdfstring{$\beta$}{beta}-mixing processes}\label{Section_ExponentialInequalities}

This section contains the main results. The following proposition is the first result of this article: we give inequalities for $\beta$-mixing processes which are similar to Davydov's inequality (cf. \cite{davydov1968convergence}).
\begin{proposition}[$\beta$-mixing and integration w.r.t.\ the joint distribution]\label{BetaMixingIntegrability}
Let $\pspace$ be a probability space and let $(S,\fS)$ and $(T,\fT)$ be measurable, topological spaces. Let $(X,Y) \colon \Omega \rightarrow S\times T$ be $\fS\otimes\fT$-measurable such that the joint distribution of $X$ and $Y$ is absolutely continuous w.r.t.\ their product measure on $\fS\otimes\fT$ with an essentially bounded Radon-Nikod{\'y}m derivative $g$, i.e.,
\[
		\p_{(X,Y)} \ll \p_X \otimes \p_Y \text{ such that } g \coloneqq \frac{ \intd{ \p_{(X,Y)} } }{\intd{ ( \p_{X} \otimes \p_Y ) } } \text{ satisfies } \norm{g}_{\infty, \p_X\otimes \p_Y } < \infty. 
\]
Let $p,q \in [1,\infty]$ be H{\"o}lder conjugate, i.e., $p^{-1} + q^{-1} = 1$. Let $h\colon S\times T \rightarrow \R$ be measurable and $p$-integrable w.r.t.\ the product measure, i.e., $\norm{h}_{p,\p_X \otimes \p_Y} = \E{ \E{ |h(X,y)|^p }|_{y=Y} }^{1/p} < \infty$. Then
\begin{align}\label{EqBetaMixingIntegrability}
	\left| \int_{S\times T} h \intd{ \p_{(X,Y)} } - \int_{S\times T} h \intd{ (\p_X \otimes \p_Y ) } \right| \le 2^{1/q} (1+\norm{g}_{\infty, \p_X \otimes \p_Y} )^{1/p} \norm{h}_{p,\p_X \otimes \p_Y} \beta(X,Y)^{1/q}.
\end{align}
If $p=\infty$ and $q=1$ and if $\norm{h}_{\infty} = \sup_{(x,y)\in S\times T} |h(x,y)|<\infty$, this statement is true without the assumption on the absolute continuity of the distributions:
\begin{align} \label{EqBetaMixingIntegrabilityII}
	\left| \int_{S\times T} h \intd{ \p_{(X,Y)} } - \int_{S\times T} h \intd{ (\p_X \otimes \p_Y )} \right| \le 2 \norm{h}_{\infty} \beta(X,Y).
\end{align}
\end{proposition}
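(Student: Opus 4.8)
The plan is to reduce the inequality to the total-variation characterization of the $\beta$-mixing coefficient stated in \eqref{BetaMixingTotalVariation} and then apply H\"older's inequality. First I would treat the bounded case \eqref{EqBetaMixingIntegrabilityII}, since it is the cleanest and motivates the general argument: write $\mu \coloneqq \p_{(X,Y)}$ and $\nu \coloneqq \p_X\otimes\p_Y$, take a Hahn-Jordan decomposition $\mu-\nu = (\mu-\nu)^+ - (\mu-\nu)^-$, and note that $\|\mu-\nu\|_{TV} = (\mu-\nu)^+(S\times T) = (\mu-\nu)^-(S\times T) = \beta(X,Y)$ by \eqref{BetaMixingTotalVariation}. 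Then
\[
\left| \int h \intd{\mu} - \int h \intd{\nu} \right| \le \int |h| \intd{(\mu-\nu)^+} + \int |h|\intd{(\mu-\nu)^-} \le \|h\|_\infty \big( (\mu-\nu)^+(S\times T) + (\mu-\nu)^-(S\times T)\big) = 2\|h\|_\infty \beta(X,Y),
\]
which gives \eqref{EqBetaMixingIntegrabilityII}. Note that this step uses no absolute-continuity hypothesis, as claimed.

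For the general case \eqref{EqBetaMixingIntegrability} with H\"older conjugates $p,q$, the idea is to interpolate between the trivial $L^1$-type bound just obtained (the exponent $q=1$ regime) and the elementary bound $|\int h\intd{\mu} - \int h\intd{\nu}| \le \int |h|\intd{\mu} + \int|h|\intd{\nu}$. Concretely, I would apply H\"older's inequality to the signed measure $\mu-\nu$: writing $f \coloneqq \intd{(\mu-\nu)}/\intd{\lambda}$ for a dominating measure $\lambda$ (e.g. $\lambda = \mu+\nu$), one has
\[
\left| \int h \intd{(\mu-\nu)} \right| \le \int |h|\,|f| \intd{\lambda} \le \left( \int |h|^p \intd{\lambda'} \right)^{1/p} \left( \int |f|\intd{\lambda} \right)^{1/q} \cdot (\text{const}),
\]
where the $L^q$-norm of the density of $\mu-\nu$ against $\lambda$ is controlled by $\|\mu-\nu\|_{TV}$ raised to a suitable power. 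The cleanest way to make this rigorous is: on the Hahn set where $\mu-\nu \ge 0$ bound $\intd{(\mu-\nu)^+} \le \intd{\mu}$ pointwise, and on the complementary set bound $\intd{(\mu-\nu)^-} \le \intd{\nu}$; then split $|h|^p = |h|^p$ and $|f|^{q} \le |f|\cdot\|f\|_\infty^{q-1}$ is the wrong direction, so instead use that $|h| = |h|\cdot \mathbf{1}$ and write $\int |h| \intd{(\mu-\nu)^+} \le \big(\int |h|^p \intd{(\mu-\nu)^+/\text{tot}}\big)^{1/p} \big((\mu-\nu)^+(S\times T)\big)$, i.e. apply H\"older with the probability measure $(\mu-\nu)^+/\beta$.

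The remaining work is to bound $\int |h|^p \intd{(\mu-\nu)^+}$ and $\int |h|^p\intd{(\mu-\nu)^-}$ by $(1+\|g\|_\infty)\,\|h\|_{p,\nu}^p$. This is where the absolute-continuity assumption and the Radon-Nikod\'ym bound enter: since $\intd{\mu} = g\intd{\nu}$ with $g \le \|g\|_\infty$, we get $(\mu-\nu)^+ \le \mu + \nu$ wastefully, but more sharply $\intd{(\mu-\nu)^+} \le \intd{\mu} = g\intd{\nu}$ on the positive Hahn set and $\intd{(\mu-\nu)^-}\le\intd{\nu}$ on the negative one, so $\int|h|^p\intd{(\mu-\nu)^+} + \int |h|^p\intd{(\mu-\nu)^-} \le \int |h|^p g\intd{\nu} + \int|h|^p\intd{\nu} \le (\|g\|_\infty + 1)\|h\|_{p,\nu}^p$. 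Combining this with the H\"older step and $(\mu-\nu)^\pm(S\times T) = \beta(X,Y)$ yields the factor $2^{1/q}(1+\|g\|_\infty)^{1/p}\|h\|_{p,\nu}\,\beta(X,Y)^{1/q}$; the $2^{1/q}$ comes from summing the two Hahn pieces and using $(a+b)^{1/p} \le \cdots$ together with $x^{1/q}+y^{1/q}$ type bounds — one must be a little careful about whether the $2$ sits on the $p$- or the $q$-side, and I expect the correct bookkeeping is to apply H\"older separately on each Hahn piece with its normalized measure of total mass $\beta$, then add, getting $\le (1+\|g\|_\infty)^{1/p}\|h\|_{p,\nu}\,\beta^{1/q}\cdot 2^{1/q}$ after using concavity of $t\mapsto t^{1/q}$ or simply $a^{1/q}+b^{1/q} \le 2^{1/q}(a+b)^{1/q}$ is false — rather one bounds each normalized integral by $\|h\|_{p,\nu}(1+\|g\|_\infty)^{1/p}$ uniformly and sums the $\beta^{1/q}$ weights, but since each Hahn piece carries full mass $\beta$ this double-counts; the honest route is the single H\"older application against $|\mu-\nu|$ with total mass $2\beta$, which is exactly where the $2^{1/q}$ originates. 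The main obstacle is precisely this constant-tracking: getting $2\beta$ rather than $\beta$ for the total variation of the \emph{signed} measure $\mu-\nu$ (recall $\|\mu-\nu\|_{TV}$ as a norm on signed measures equals $(\mu-\nu)^+ + (\mu-\nu)^- = 2\beta$ in our normalization, while \eqref{BetaMixingTotalVariation} records $\beta$ as the one-sided quantity), and then splitting the integrand's $L^p$-mass correctly so that the $g$-bound only costs a factor $(1+\|g\|_\infty)^{1/p}$ and not $(1+\|g\|_\infty)$.
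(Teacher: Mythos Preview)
Your final approaches are correct for both parts, but they differ from the paper's organization. For \eqref{EqBetaMixingIntegrabilityII} the paper does not use the Hahn--Jordan decomposition; it reduces to simple functions $h=\sum_i a_i\1{A_i}$, groups the $A_i$ according to the sign of $\p_{(X,Y)}(A_i)-\p_X\otimes\p_Y(A_i)$, and bounds each group by one copy of $\beta(X,Y)$. Your Hahn--Jordan argument is equivalent and arguably cleaner, since it bypasses the approximation step entirely.

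For \eqref{EqBetaMixingIntegrability} the paper's route is shorter and avoids all the constant-tracking you wrestle with: since $\intd{\p_{(X,Y)}}=g\,\intd{(\p_X\otimes\p_Y)}$, the difference is simply $\int h(g-1)\,\intd{(\p_X\otimes\p_Y)}$, and one applies H\"older directly against the \emph{product} measure, bounding $\int|g-1|^q \le \|g-1\|_\infty^{q-1}\int|g-1| = (1+\|g\|_\infty)^{q-1}\cdot 2\beta(X,Y)$ via the $L^1$ formula \eqref{EqEqBetaMixingIntegrability1}. This immediately yields $2^{1/q}(1+\|g\|_\infty)^{1/p}\|h\|_{p,\p_X\otimes\p_Y}\beta^{1/q}$ with no Hahn sets and no ambiguity about where the factor $2$ sits. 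Your route---H\"older against $|\mu-\nu|$ with total mass $2\beta$, then $\int|h|^p\,\intd{|\mu-\nu|}\le(1+\|g\|_\infty)\|h\|_{p,\nu}^p$---also lands on the same constants, but the several abandoned attempts in your write-up (normalized Hahn pieces, concavity, etc.) should be cut: only the ``honest route'' paragraph is needed, and it can be stated in two lines once you recognize that $|\mu-\nu|=|g-1|\,\nu$.
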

\begin{proof}[Proof of Proposition \ref{BetaMixingIntegrability}]
We begin with the first statement. Therefore we shall use \eqref{BetaMixingTotalVariation} and the following fact: since the joint distribution is absolutely continuous, the total variation (and the $\beta$-mixing coefficient) can be written as
\begin{align}\label{EqEqBetaMixingIntegrability1}
		\beta(X,Y) = \sup_{A \in \cA} | \p_{X\otimes Y} (A) - \p_X \otimes \p_Y(A) | = \frac{1}{2} \int_{S\times T} |g-1| \, \intd{ (\p_X \otimes \p_Y) }.
\end{align}
Indeed, this well-known equality follows if one considers the sets $\{g<1\}, \{g>1\}\in \cA$. Thus, Equation~\eqref{EqEqBetaMixingIntegrability1} and the H{\"o}lder inequality enable us to write
\begin{align*}
		&\left| \int_{S\times T} h \, \intd{ \p_{(X,Y)} } - \int_{S\times T} h \, \intd{	( \p_{X} \otimes \p_Y ) }	\right|  \le  \int_{S\times T} |h| \, |g-1|\, \intd{( \p_{X} \otimes \p_Y ) } \\
		&\le \left( \int_{S\times T} |h|^p \intd{	( \p_{X} \otimes \p_Y ) } \right)^{1/p} \, 2^{1/q}\,  \left( \frac{1}{2} \int_{S\times T} |g-1| \intd{	( \p_{X} \otimes \p_Y )}\right)^{1/q} \, \norm{ g-1 }_{\infty, \p_X \otimes \p_Y }^{(q-1)/q} \\
		&\le 2^{1/q} (1+\norm{g}_{\infty, \p_X \otimes \p_Y })^{1/p} \,\norm{h}_{p, \p_X \otimes \p_Y } \, \beta(X,Y)^{1/q}.
\end{align*}
We come to the second statement. Let $p=\infty$ and $q=1$. Let $h$ be bounded as $\sup_{(x,y)\in S\times T} |h(x,y)|<\infty$ and assume w.l.o.g.\ that $h$ is non negative. Such a function $h$ can be approximated uniformly on $S\times T$ by a sequence of simple functions $h_n$ which converge from below to $h$, i.e., $0 \le \sup_{(x,y)\in S\times T} h(x,y) - h_n(x,y) \le 2^{-n}$. Hence, it suffices to consider simple functions: let $h = \sum_{i=1}^N a_i \1{ A_i }$ for $N\in\N$ and real numbers $a_i\in \R_+$ and pairwise disjoint sets $A_i\in \fS \otimes \fT$. Set $I \coloneqq \{i: 1\le i \le N, \p_X\otimes \p_Y (A_i) \le \p_{(X,Y)} (A_i) \}$ and $J \coloneqq \{1,\ldots,N\} \setminus I$. Hence, $I$ (respectively $J$) is the index sets where the probability of the joint distribution exceeds (respectively is less than) the probability of the product measure. Then, the second statement follows:
\begin{align*}
		&\left| \int_{S\times T} h \, \intd{ \p_{(X,Y)} } - \int_{S\times T} h \, \intd{	( \p_{X} \otimes \p_Y ) }	\right| \\
	& \le \norm{h}_{\infty} \left\{ \sum_{i \in I} \p_{(X,Y)} (A_i) - \p_X\otimes \p_Y (A_i) + \sum_{i\in J}  \p_X\otimes \p_Y (A_i) - \p_{(X,Y)} (A_i) \right\} \\
	&= \norm{h}_{\infty} \left\{ \p_{(X,Y)} \left( \bigcup_{i\in I} A_i \right) -\p_X\otimes \p_Y \left( \bigcup_{i\in I} A_i \right) + \p_X\otimes \p_Y \left( \bigcup_{i\in J} A_i \right) -  \p_{(X,Y)} \left( \bigcup_{i\in J} A_i \right) \right\} \le 2  \norm{h}_{\infty} \beta(X,Y).
\end{align*}
\end{proof}

We present the main results of this article: we derive the exponential inequalities which prove the convergence of the sum $n^{-1} \sum_{k=1}^n f(X_k,X_t) - \E{ f(X_0,x)} \big|_{x= X_t}$ for some $t\in \Z$ fixed. We assume for the rest of the article that the $\beta$-mixing coefficients (and the $\alpha$-mixing coefficients) of $X$ are exponentially decreasing, i.e., there are $\kappa_0, \kappa_1 \in \R_+$ such that $\beta(n) \le  \kappa_0 \exp(-\kappa_1 n)$. \cite{dedecker2004coupling} give an example of such processes: consider a stationary Markov chain $X_n = \psi( X_{n-1} ) + \epsilon_n$ where the innovations are integrable and i.i.d.\ and $\psi$ is Lipschitz-continuous with a Lipschitz constant strictly smaller than 1. Then the chain is geometrically $\beta$-mixing if the distribution of the innovations has an absolutely continuous component which is strictly positive in a neighborhood of zero. For more details, we refer to \cite{dedecker2004coupling} and the references therein.\\
Moreover, we assume w.l.o.g.\ for the next results that the probability space contains an additional random variable $X'_0$ which has the same marginals as the stationary process $X$ but is independent of $X$. We follow the ideas of \cite{merlevede2009} who investigate sums of $\alpha$-mixing processes. Therefore, we define an extension of the discrete process $X=\{ X_k: k\in\Z\}$ to the real interval: define for a real number $t$ the random variable $X_t$ by $X_{\floor{t}}$ which makes the process $\{ X_t: t\in \R\}$ right continuous. In the same way, we extend the mixing coefficients to the real numbers, i.e., $\alpha(t) = \alpha(\floor{t})$ and $\beta(t) = \beta( \floor{t})$ for $t\in\R$. It follows from this definition of the continuous time process that $\sum_{k=1}^n f(X_k,X_t) = \int_{1}^{n+1} f(X_s,X_t) \intd{s}$. We present the main result of this section.

\begin{theorem}\label{LaplaceTransform}
Let $\{X_k: k \in \Z\}$ be a stationary $\beta$-mixing process whose marginals $X_k$ take values in a state space $(S,\fS)$. The $\beta$-mixing coefficients satisfy $\beta(n) \le \kappa_0 \exp(-\kappa_1 n) $ for some $\kappa_0,\kappa_1\in \R_+$. Let $f\colon S\times S \rightarrow [-B, B]$, $B\in \R_+$, be a bounded and measurable function which fulfills $\E{f(X_s,x)} =0$ for $\p_{X_0}$-almost all $x\in S$. Let $A \ge 14 \vee (2\kappa_1)$. Then there is a constant $C$ which only depends on $\kappa_0$ and $\kappa_1$ such that for all $0 < \gamma B \le \frac{1\wedge \kappa_1}{2} \wedge \frac{ \kappa_1}{4 \log A}$
\begin{align*}%\label{EqLaplaceTransform0}
		\E{ \exp( \gamma \int_{(0,A]} f(X_k,X_t) ) } \le 3\kappa_0 \exp\left( - \frac{\kappa_1  A}{4 \log A} \right) + \exp \left(	C \gamma^2 B^2 A \log A  + \frac{\gamma B A}{\log A} \right).
\end{align*}
\end{theorem}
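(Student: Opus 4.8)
The plan is to write $S := \int_{(0,A]} f(X_s,X_t)\intd{s}$, to assume $\gamma>0$, and to proceed in three steps: remove from the integral the short window around $t$ on which $X_t$ interacts too strongly with the integrand; decouple the residual dependence on the single variable $X_t$ by means of Proposition~\ref{BetaMixingIntegrability}; and finally fall back on a classical Bernstein-type bound for a bounded, centered $\beta$-mixing process. For the first step I would fix an integer $p$ of order $A/\log A$ and split $(0,A]$ into $M:=(0,A]\cap(t-p,t+p]$ and the outer pieces $L:=(0,A]\cap(-\infty,t-p]$ and $R:=(0,A]\cap(t+p,\infty)$, with corresponding integrals $S_M,S_L,S_R$. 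Since $|f|\le B$ and $|M|\le 2p$ one has $S_M\le 2pB$, hence $\E{e^{\gamma S}}\le e^{2\gamma pB}\,\E{e^{\gamma(S_L+S_R)}}$; here $2\gamma pB$ is of order $\gamma BA/\log A$, which is exactly the linear term of the claimed bound.

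The core of the argument is to bound $\E{e^{\gamma(S_L+S_R)}}$, and the obstacle is that $S_L$ (which is $\sigma(X_s:s\le t)$-measurable) and $S_R$ (which is $\sigma(X_t)\vee\sigma(X_s:s\ge t+p)$-measurable) both still contain the common variable $X_t$ and so are \emph{not} decoupled. I would remove this common dependence in two \emph{one-sided} stages, since a direct two-sided decoupling of $X_t$ from $(X_s)_{|s-t|\ge p}$ is in general not controlled by $\beta(p)$. First, viewing $e^{\gamma(S_L+S_R)}$ as a measurable function of the pair $\big((X_s)_{s\le t},(X_s)_{s\ge t+p}\big)$, bounded by $e^{\gamma AB}$ because $|L|+|R|\le A$, and applying \eqref{EqBetaMixingIntegrabilityII}, one replaces the joint law by the product law at the cost of an additive error $\le 2e^{\gamma AB}\beta(p)$; this turns $(X_s)_{s\ge t+p}$ into an independent copy, so $S_R$ becomes $\int_R f(X_s^{\ast},X_t)\intd{s}$. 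Viewing the result in turn as a function (again bounded by $e^{\gamma AB}$) of the pair $\big((X_s)_{s\le t-p},X_t\big)$ and invoking \eqref{EqBetaMixingIntegrabilityII} once more, one decouples $X_t$ from $(X_s)_{s\le t-p}$ at a further cost $\le 2e^{\gamma AB}\beta(p)$. After these two steps $X_t$ has been replaced throughout by the independent copy $X'_0$ postulated before the theorem, and the $L$- and $R$-integrals are driven by independent copies of the process, so that $\E{e^{\gamma(S_L+S_R)}}\le\E{\Lambda_L(X'_0)\,\Lambda_R(X'_0)}+4e^{\gamma AB}\beta(p)$ with $\Lambda_J(x):=\E{e^{\gamma\int_J f(X_s,x)\intd{s}}}$. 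Since $\gamma AB\le\kappa_1 A/(4\log A)$ on the admissible range of $\gamma$ while $\beta(p)\le\kappa_0 e^{-\kappa_1 p}$ is of order $\kappa_0 e^{-\kappa_1 A/(2\log A)}$, the error $e^{2\gamma pB}\cdot 4e^{\gamma AB}\beta(p)$ is, after tuning the constant in $p$ and using $A\ge 14\vee(2\kappa_1)$, of the order of the term $3\kappa_0\exp(-\kappa_1 A/(4\log A))$ in the statement.

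It remains to bound $\Lambda_L(x)$ and $\Lambda_R(x)$ for $\p_{X_0}$-almost every $x$. For such $x$ the process $s\mapsto f(X_s,x)$ is stationary, bounded by $B$, centered, and $\beta$-mixing with coefficient $\le\kappa_0 e^{-\kappa_1 n}$, so $\Lambda_J(x)$ is the Laplace transform of the integral, over an interval of length $\le A$, of such a process. I would bound it by a Bernstein-type inequality in the spirit of \cite{merlevede2009}: partition the interval into consecutive blocks of length of order $(\log A)/\kappa_1$, split the two colour classes by the Cauchy--Schwarz inequality, replace each colour class by an independent family through the coupling form of $\beta$-mixing (the block-sized separation keeping the error negligible by the exponential decay of $\beta$), and use $\E{e^{\gamma U_i}}\le 1+C\gamma^2\E{U_i^2}\le 1+C\gamma^2 B^2(\log A)^2/\kappa_1^2$ for the centered block integrals together with $\log(1+u)\le u$. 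With $\cO(A/\log A)$ blocks, each of squared size $\cO(B^2(\log A)^2)$, this gives $\Lambda_J(x)\le\exp(C\gamma^2 B^2 A\log A)$ uniformly in $x$ and in the interval. Multiplying the bounds for $\Lambda_L$ and $\Lambda_R$, substituting into the previous step and multiplying by $e^{2\gamma pB}$ then yields the asserted inequality after renaming constants; the conditions $A\ge 14\vee(2\kappa_1)$ and $\gamma B\le\tfrac{1\wedge\kappa_1}{2}\wedge\tfrac{\kappa_1}{4\log A}$ are precisely the bookkeeping hypotheses that make the competing error terms collapse onto the two displayed summands.

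The hard part will be the decoupling step. One must recognise that the shared variable $X_t$ is exactly what destroys the mixing of $\{f(X_k,X_t)\}_k$ and remove it through the total-variation identity underlying Proposition~\ref{BetaMixingIntegrability}; carry this out in two one-sided stages rather than one two-sided stage; and calibrate the excision width $p\asymp A/\log A$ so that $\beta(p)$ dominates the unavoidable factor $e^{\gamma AB}\asymp e^{\kappa_1 A/(4\log A)}$ coming from the crude supremum bound on the exponential, while simultaneously keeping the excision bias $2\gamma pB$ down to the claimed $\gamma BA/\log A$. The Bernstein estimate for the centered pieces is, by comparison, standard; its only subtleties are to keep the mixing error inside the exponent and to lose exactly one power of $\log A$, which is the source of the factor $A\log A$.
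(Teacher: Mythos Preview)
Your proof is correct and follows the same overall strategy as the paper: excise a window of width $\asymp A/\log A$ around $t$, decouple $X_t$ from the two outer pieces, and then invoke a Merlev\`ede--Peligrad--Rio Bernstein bound on each piece. The paper organises the decoupling differently, however: it first replaces $X_t$ by the independent copy $X'_0$ in a single application of Proposition~\ref{BetaMixingIntegrability}, and then, since the $I_L$- and $I_R$-integrals still share the common process $X$, uses Ibragimov's $\alpha$-mixing inequality (Proposition~\ref{IbragimovAlphaMixing}) to factor them, incurring an additional error $\alpha(\delta A)\,e^{\gamma B(1-\delta)A}$. Your two one-sided $\beta$-steps accomplish both the removal of $X_t$ and the independence of the left and right blocks in one go, so the separate $\alpha$-mixing step is never needed; moreover, your remark that a direct two-sided decoupling of $X_t$ from $(X_s)_{|s-t|\ge p}$ is not obviously controlled by $\beta(p)$ is well taken---the paper tacitly uses a sandwich-type $\beta$-coefficient at that point without comment. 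For the final Bernstein estimate the paper simply cites Lemma~10 of \cite{merlevede2009}, which you instead sketch via the standard blocking argument; both routes give the $\exp(C\gamma^2 B^2 A\log A)$ factor.
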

\begin{proof}[Proof of Theorem~\ref{LaplaceTransform}]
Let $t\in \R$ be fixed. In the first step we divide the interval $(0,A]$ into three pairwise disjoint intervals $I_L, \tilde{I}$ and $I_R$. Therefore, let $\delta \coloneqq 1/ \log A \in (0,1)$, then $\tilde{I}$ is given by
$$  \tilde{I} \coloneqq (-\delta A/2 + t , \delta A/2 + t] \cap (0,A].$$
$I_L$ is then given as the left residual part of $I$, namely, $I_L = (-\infty, -\delta/2 A +t] \cap (0,A]$. In the same way, $I_R$ is the right residual part, $I_R = (\delta/2 A + t,\infty] \cap (0,A]$. Note that both $I_L$ and $I_R$ have a minimal length of $(1-\delta)A/2$ and that their measure sums up to at most $A$. Furthermore, for all $A \ge 14$, $I_R$ and $I_L$ have a length of at least 4. In order to estimate the Laplace transform, we bound the integral over the interval $\tilde{I}$ by its maximal value and then apply Proposition~\ref{BetaMixingIntegrability} and Proposition~\ref{IbragimovAlphaMixing}:
\begin{align}
		&\E{ \exp\left(\gamma \int_0^A f(X_s,X_t) \intd{s} \right) } \nonumber\\
		&\le \E{ \exp\left(\gamma \int_{I_L \cup I_R} f(X_s,X_t) \intd{s} \right) } \exp\{ \gamma B \delta A \} \nonumber \\
		&\le \Biggl\{ 	\left| \E{ \exp\left(\gamma \int_{I_L \cup I_R} f(X_s,X_t) \intd{s} \right) - \exp\left(\gamma \int_{I_L \cup I_R} f(X_s,X'_0) \intd{s} \right) }		\right|	\nonumber \\
		&\quad + \E{\exp\left(\gamma \int_{I_L \cup I_R} f(X_s,X'_0) \intd{s} \right)} \Biggl\} \, \exp\{ \gamma B \delta A \} \nonumber \\
		&\le \Biggl\{	2 \beta\left( \frac{\delta A}{2} \right) \exp( \gamma B (1-\delta)A ) + \alpha( \delta A) \exp( \gamma B (1-\delta) A ) \nonumber \\
		&\quad + \E{ \E{	\exp\left( \gamma \int_{I_L} f(X_s,x)\intd{s} \right)	}\E{\exp\left( \gamma \int_{I_R} f(X_s,x)\intd{s} \right)			}\Bigg|_{x=X'_0}		}	\Biggl\}\, \exp\{ \gamma B \delta A \} \nonumber \\
		\begin{split}\label{EqLaplace1}
		&\le 3 \kappa_0 \exp\left( - \frac{\kappa_1 \delta A}{2} + \gamma B A \right) \\
		&\quad +  \E{ \E{	\exp\left( \gamma \int_{I_L} f(X_s,x)\intd{s} \right)	}\E{\exp\left( \gamma \int_{I_R} f(X_s,x)\intd{s} \right)			}\Bigg|_{x=X'_0}		}	\, \exp\{ \gamma B \delta A \}.
		\end{split}
\end{align}
Consider the inner expectations in \eqref{EqLaplace1}: since the random variable $f(X_s,x)$ is centered for $\p_{X_0}$-almost all $x\in S$, we can apply Lemma 10 from \cite{merlevede2009} and obtain if $(1-\delta)A/2 > 4 \vee (2\kappa_1)$ that
\begin{align}
		\E{	\exp\left( \gamma \int_{I_L} f(X_s,x)\intd{s} \right)	}\E{\exp\left( \gamma \int_{I_R} f(X_s,x)\intd{s} \right)			} &\le \exp\left(	C \gamma^2 B^2 ( |I_L| \log |I_L| + |I_R| \log |I_R|)	\right) \nonumber \\
		&\le \exp \left(	C \gamma^2 B^2 A \log A \right), \label{EqLaplace2}
\end{align}
where $0 \le \gamma B \le (\kappa_1 \wedge 1)/2$ and $C$ is a constant which only depends on the bound of the mixing coefficients $\kappa_0,\kappa_1$. Combining \eqref{EqLaplace1} with \eqref{EqLaplace2} yields
$$ \E{\exp\left(\gamma \int_0^A f(X_s,X_t) \intd{s} \right) } \le  3\kappa_0 \exp\left( - \frac{\kappa_1  A}{2 \log A} + \gamma B A \right) + \exp \left(	C \gamma^2 B^2 A \log A  + \frac{\gamma B A}{\log A} \right). $$
\end{proof}

Since the bound $B$ is the only property of the function $f$ which is determining the bound on the Laplace transform given in Theorem~\ref{LaplaceTransform}, we can easily extend the statement to a sequence of functions $(f_n: n:\N)$ which are all uniformly bounded: $f_n\colon S\times S \rightarrow [-B,B]$ for $n\in\N$. Therefore, we give the corollary

\begin{corollary}\label{LargeDeviation}
Let the stochastic process $X$ be as in Theorem~\ref{LaplaceTransform}. Let $(f_n: n:\N)$ be a sequence of functions such that each element $f_n\colon S\times S \rightarrow [-B,B]$ is as in Theorem~\ref{LaplaceTransform} and satisfies in particular $\E{ f_n(X_0,x)}=0$ for $\p_{X_0}$-almost all $x\in S$. Then for all $n\in \N$ there are constants $a_1,a_2$ which only depend on the parameter $\kappa_0, \kappa_1$ such that
\begin{align*}%\label{EqLargeDeviation0}
		\p\left(	n^{-1} \left| \sum_{k=1}^n f_n(X_k,X_t) \right| \ge \epsilon	\right) \le a_1 \exp\left( - a_2 \frac{ \epsilon \, n }{ B \log n \log \log n } \right).
\end{align*}
\end{corollary}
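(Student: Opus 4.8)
The plan is the exponential Markov (Chernoff) inequality fed with the Laplace transform bound of Theorem~\ref{LaplaceTransform}, followed by an optimisation over the parameter $\gamma$. First I would dispose of the trivial ranges. Since $|f_n|\le B$ we have $n^{-1}\bigl|\sum_{k=1}^n f_n(X_k,X_t)\bigr|\le B$ almost surely, so the assertion is void for $\epsilon>B$ and we may assume $0<\epsilon\le B$; moreover only finitely many $n$ violate $n\ge 14\vee(2\kappa_1)$, and for those the bound holds after enlarging $a_1$, so assume also $n\ge 14\vee(2\kappa_1)$. Writing $S_n\coloneqq\sum_{k=1}^n f_n(X_k,X_t)$ and using $\{|S_n|\ge n\epsilon\}=\{S_n\ge n\epsilon\}\cup\{-S_n\ge n\epsilon\}$ (the second event being treated identically with $-f_n$, which again fits Theorem~\ref{LaplaceTransform}), it suffices to bound $\p(S_n\ge n\epsilon)\le e^{-\gamma n\epsilon}\,\E{\exp(\gamma S_n)}$ for a well-chosen $\gamma=\gamma(n,\epsilon,B)>0$.

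To invoke Theorem~\ref{LaplaceTransform} I would match $S_n$ with its left-hand side: by the continuous-time extension $S_n=\int_1^{n+1}f_n(X_s,X_t)\intd{s}$, and a shift of the integration variable together with stationarity of $X$ shows that $\E{\exp(\gamma S_n)}=\E{\exp\bigl(\gamma\int_{(0,n]}f_n(X_s,X_{t-1})\intd{s}\bigr)}$; since $t\in\Z$ is arbitrary, Theorem~\ref{LaplaceTransform} applies with $A=n$. Hence, for $0<\gamma B\le\frac{1\wedge\kappa_1}{2}\wedge\frac{\kappa_1}{4\log n}$ and using $e^{-\gamma n\epsilon}\le1$ on the first summand,
\begin{align*}
\p(S_n\ge n\epsilon) \le 3\kappa_0\exp\bigl(-\tfrac{\kappa_1 n}{4\log n}\bigr) + \exp\bigl(-\gamma n\epsilon+C\gamma^2 B^2 n\log n+\tfrac{\gamma B n}{\log n}\bigr).
\end{align*}
The first summand does not depend on $\epsilon$, and since $\epsilon\le B$ forces $\epsilon/B\le 1\le\log\log n$ for large $n$, it is already $\le a_1\exp\bigl(-a_2\,\epsilon n/(B\log n\log\log n)\bigr)$ as soon as $a_2\le\kappa_1/4$ and $a_1\ge 6\kappa_0$.

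It remains to pick $\gamma$ so that the second summand obeys the same bound. The natural choice is $\gamma\coloneqq\min\bigl\{\tfrac{\kappa_1}{4B\log n},\ \tfrac{\epsilon}{cB^2\log n}\bigr\}$ with $c$ a constant depending only on $C$ (hence on $\kappa_0,\kappa_1$), which respects the admissibility constraint on $\gamma$. With this choice $C\gamma^2B^2n\log n$ is only a fixed fraction of $\gamma n\epsilon$, while the nuisance term $\gamma Bn/\log n$ --- which is the price paid in the proof of Theorem~\ref{LaplaceTransform} for bounding, by its supremum, the contribution of the short central block around $t$ (the interval $\tilde I$ there) --- is negligible against $\gamma n\epsilon$ once $\epsilon$ exceeds a fixed multiple of $B/\log\log n$; the exponent then becomes $\le -c'\,\epsilon^2 n/(B^2\log n)\le -c'\,\epsilon n/(B\log n\log\log n)$, the last inequality using $\epsilon/B\ge 1/\log\log n$. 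Collecting constants and merging the two summands into one expression $a_1\exp(-a_2\,\cdot)$ finishes the argument. I expect this $\gamma$-optimisation, and in particular taming the extra factor $\gamma Bn/\log n$, to be the only genuinely delicate point --- it is exactly what produces the second logarithmic factor $\log\log n$ and fixes the admissible range of $\epsilon$ --- whereas the Chernoff step, the stationarity shift, and the reductions to $\epsilon\le B$ and to large $n$ are routine.
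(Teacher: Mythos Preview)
Your approach is correct and is essentially the paper's: apply the exponential Markov inequality together with the Laplace-transform bound of Theorem~\ref{LaplaceTransform} (with $A=n$, after the stationarity shift you describe). The only real difference is the choice of $\gamma$. The paper simply sets $\gamma=(\kappa_1\wedge 1)/(4B\log n\,\log\log n)$, which already lies in the admissible range and makes $-\gamma n\epsilon$ equal to the target exponent $-a_2\,\epsilon n/(B\log n\log\log n)$; it then just observes that the two positive contributions $C\gamma^2B^2n\log n$ and $\gamma Bn/\log n$ are of order $n/(\log n\,(\log\log n)^2)$ and declares the proof finished. You instead take an $\epsilon$-dependent $\gamma$ closer to the optimizer, first reach the Bernstein-type rate $-c'\epsilon^2 n/(B^2\log n)$, and then weaken it via $\epsilon/B\ge 1/\log\log n$. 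Both routes need this same implicit lower threshold on $\epsilon$ to absorb the nuisance terms (the paper is no more explicit here than you are), but the paper's fixed $\gamma$ avoids your case split and gets to the answer in one line.
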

\begin{proof}[Proof of Corollary~\ref{LargeDeviation}]
If we apply Theorem~\ref{LaplaceTransform} to the interval $(0, n]$ and to the process $X$ which is formally shifted one integer to the left, we find with the choice $\gamma = (\kappa_1 \wedge 1) / (4 B \log n \log \log n)$ and Markov's inequality that for $n\ge 14$
\begin{align*}
		\p\left(	n^{-1} \left| \sum_{k=1}^n f_n(X_k,X_t) \right| \ge \epsilon	\right) &\le \exp\left(- \frac{ (\kappa_1 \wedge 1)  \epsilon \, n }{ 4 B \log n \log \log n } \right) \\
		&\quad \cdot \Biggl\{ 3\kappa_0 \exp\left( - \frac{\kappa_1  n}{4 \log n} \right) +\exp \left(	C \gamma^2 B^2 n \log n  + \frac{\gamma B n}{\log n} \right) \Biggl\} .
\end{align*}
Note that the last term in the curly brackets grows at a rate of $n/ \left(\log n (\log \log n)^2 \right)$. This finishes the proof.
\end{proof}

We consider the special case where $f_n(X,Y) \equiv X$ in the context of Corollary~\ref{LargeDeviation}. We obtain in this case for a $\beta$-mixing process $X$ that $\p\left(	\left| n^{-1} \sum_{k=1}^n X_k \right| \ge \epsilon	\right) \le a_1 \exp\left\{	- a_2 \epsilon\, n/ (\log n \log \log n) \right\}$. This corresponds to the rate given by \cite{merlevede2009} who investigate concentration inequalities of centered, real-valued and strongly mixing processes $X$ where the $\alpha$-mixing coefficients decay at an exponential rate. This means that our new inequality from Corollary~\ref{LargeDeviation} attains the same rate in this special case.\\
We can derive an extension of Theorem~\ref{LaplaceTransform} to a sequence of unbounded functions $f_n\colon S\times S \rightarrow \R$.

\begin{theorem}\label{UnboundedExponentialInequality}
Let the stationary process $X$ be as in Theorem~\ref{LaplaceTransform}. Furthermore, let $(f_n:n\in\N)$ be a sequence of functions $f_n\colon S\times S \rightarrow \R$ which fulfills $\E{ f_n(X_0,x)}=0$ for $\p_{X_0}$-almost all $x\in S$. Let the joint distribution of $(X_0,X_k)$ be absolutely continuous w.r.t.\ the product measure such that the corresponding Radon-Nikod{\'y}m derivatives are essentially bounded uniformly over $k\in \N$, i.e.,
$$ \sup_{ k\in\N } \norm{ \frac{ \intd\p_{ (X_0,X_k) } }{\intd{ \left(\p_{X_0} \otimes \p_{X_k}  \right)}} } < \infty. $$
Then, there are constants $a_1,a_2 \in\R_+$ which do not depend on $n \in \N$ and $t\in \Z$ such that for all $\epsilon > 0$
\begin{align}\begin{split}\label{EqUnboundedExponentialInequality0}
&\p\left(	n^{-1} \left| \sum_{k=1}^n f_n(X_k,X_t) \right| \ge \epsilon	\right) \\
 &\le \inf_{B>1} \Bigg\{ a_1 \epsilon ^{-1} \exp\left( -a_2 \frac{\epsilon n}{B\log n \log \log n}  \right) + 4 \epsilon^{-1} (k-1)^{-1} B^{-(k-1)} \E{ f_n(X_0,X'_0)^k} \\
&\quad + a_1 (n\epsilon)^{-1} \E{ f_n(X_0,X'_0)^{p r}}^{1/(pr)} B^{-k/(pu)} \E{ f_n(X_0,X'_0)^k}^{1/(pu)} \Bigg\}
\end{split}\end{align}
where $p>1$, $r^{-1}+u^{-1}=1, r,u>1$ and $k>1$.
\end{theorem}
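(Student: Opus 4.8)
I would prove this by a truncation argument that reduces the unbounded case to Corollary~\ref{LargeDeviation} together with the Davydov-type estimate of Proposition~\ref{BetaMixingIntegrability}. Fix $B>1$ and let $f_n^B(x,y)\coloneqq (f_n(x,y)\wedge B)\vee(-B)$ be the clipping of $f_n$ to $[-B,B]$, so that $|f_n-f_n^B|=(|f_n|-B)_+\le |f_n|\1{|f_n|>B}$. Since $f_n^B$ need not be centered, I further write $f_n^B=\bar f_n+c_n$ with $c_n(y)\coloneqq \E{f_n^B(X_0,y)}$ and $\bar f_n(x,y)\coloneqq f_n^B(x,y)-c_n(y)$; then $\bar f_n$ takes values in $[-2B,2B]$ and satisfies $\E{\bar f_n(X_0,y)}=0$ for $\p_{X_0}$-a.e.\ $y$, while $|c_n(y)|=|\E{(f_n-f_n^B)(X_0,y)}|\le \E{(|f_n(X_0,y)|-B)_+}$ because $\E{f_n(X_0,y)}=0$. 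Starting from
\[
\sum_{k=1}^n f_n(X_k,X_t)=\sum_{k=1}^n \bar f_n(X_k,X_t)+n\,c_n(X_t)+\sum_{k=1}^n (f_n-f_n^B)(X_k,X_t),
\]
I split $\epsilon$ into three pieces and bound \eqref{EqUnboundedExponentialInequality0} by the three resulting probabilities.

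\textbf{The bounded centered part.} To $\p(n^{-1}|\sum_{k=1}^n \bar f_n(X_k,X_t)|\ge \epsilon/2)$ I apply Corollary~\ref{LargeDeviation} with $2B$ in place of $B$; this yields the first summand $a_1\epsilon^{-1}\exp(-a_2\epsilon n/(B\log n\log\log n))$, the prefactor $\epsilon^{-1}$ and the constants being chosen merely to present the three terms uniformly.

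\textbf{The centering term and the tail.} For $n\,c_n(X_t)$, Markov's inequality gives a bound of order $\epsilon^{-1}\E{|c_n(X_t)|}$; the key point is that $|c_n(y)|\le \E{(|f_n(X_0,y)|-B)_+}$ is evaluated by integrating over $X_0$ first and over $X_t$ only afterwards, so no dependence enters, and $\p_{X_t}=\p_{X_0}$ by stationarity gives $\E{|c_n(X_t)|}\le \int(|f_n|-B)_+\intd{(\p_{X_0}\otimes\p_{X_0})}\le (k-1)^{-1}B^{-(k-1)}\E{f_n(X_0,X'_0)^k}$ by the layer-cake identity $\E{(|Z|-B)_+}=\int_B^\infty\p(|Z|>t)\intd{t}$ and Markov. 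For $\sum_{k=1}^n(f_n-f_n^B)(X_k,X_t)$, Markov gives a bound of order $(n\epsilon)^{-1}\sum_{k=1}^n\E{|(f_n-f_n^B)(X_k,X_t)|}$, where now $(X_k,X_t)$ is genuinely dependent; I apply Proposition~\ref{BetaMixingIntegrability} to each summand with H{\"o}lder exponents $p,q$, which replaces the joint-law integral of $h\coloneqq(|f_n|-B)_+$ by its product-law integral at the cost of $2^{1/q}(1+\norm{g}_{\infty})^{1/p}\norm{h}_{p,\p_{X_0}\otimes\p_{X_0}}\beta(|k-t|)^{1/q}$, where $\norm{g}_{\infty}$ is controlled by the uniform essential bound on the Radon-Nikod{\'y}m derivatives (by stationarity the density of $\p_{(X_k,X_t)}$ is bounded by $\sup_j\norm{g_j}_{\infty}$). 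The product-law pieces sum to $n\int h\intd{(\p_{X_0}\otimes\p_{X_0})}$ and, after division by $n$, combine with the centering term into the second summand; the correction pieces sum to $\norm{h}_p\sum_{k=1}^n\beta(|k-t|)^{1/q}=O(\norm{h}_p)$ because $\beta(n)\le\kappa_0 e^{-\kappa_1 n}$, so after division by $n$ they become the $(n\epsilon)^{-1}$-order third summand once a second H{\"o}lder inequality with conjugate exponents $r,u$ is used: $\norm{h}_p\le \E{f_n(X_0,X'_0)^{pr}}^{1/(pr)}((\p_{X_0}\otimes\p_{X_0})(|f_n|>B))^{1/(pu)}\le \E{f_n(X_0,X'_0)^{pr}}^{1/(pr)}B^{-k/(pu)}\E{f_n(X_0,X'_0)^k}^{1/(pu)}$ by Markov. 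The single diagonal index $k=t$, if present, is bounded separately and contributes only at order $1/n$. Taking the infimum over $B>1$, with $p,r,u,k$ kept as free parameters, completes the proof.

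\textbf{Main obstacle.} The crux is the tail term: $Z_k=f_n(X_k,X_t)$ is in general not a mixing sequence, so the dependence between $X_k$ and $X_t$ can only be exploited through the joint-versus-product comparison of Proposition~\ref{BetaMixingIntegrability} — which is exactly why the absolute-continuity hypothesis with uniformly bounded density is imposed. The delicate part is the bookkeeping: the per-summand correction carries a factor $\beta(|k-t|)^{1/q}$ whose sum over $1\le k\le n$ must stay bounded (this is where geometric mixing enters), so that this contribution is only of order $1/n$ and hence asymptotically negligible, while the leading product-law term and the centering term are forced small by letting $B$ grow at a rate matched to the moments of $f_n$. Verifying that $a_1,a_2$ can be chosen independently of $n$ and $t$, and fixing the $\epsilon$-splitting, is routine.
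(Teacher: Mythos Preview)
Your proposal is correct and follows essentially the same route as the paper: truncate at level $B$, apply Corollary~\ref{LargeDeviation} to the bounded centered piece, and handle the tail via Markov together with Proposition~\ref{BetaMixingIntegrability}, summing the $\beta(|k-t|)^{1/q}$ corrections and bounding the remaining moments by layer-cake plus a second H{\"o}lder step. The only cosmetic difference is bookkeeping: the paper splits the overshoot into $f_{n,+}$ and $f_{n,-}$ and centers each piece separately inside the Markov bound, whereas you keep a single tail $(f_n-f_n^B)$ and isolate the centering $c_n(X_t)$ as its own term; both arrangements produce the same three summands in \eqref{EqUnboundedExponentialInequality0}.
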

\begin{proof}[Proof of Theorem~\ref{UnboundedExponentialInequality}]
If the right hand side of \eqref{EqUnboundedExponentialInequality0} is infinite, there is nothing to prove. So we can assume that the parameters $k,p,r$ and $u$ are such that the all moments on the right hand side exist. Since $\E{ f_n(X_0,x)} = 0$ for $\p_{X_0}$-almost all $x\in S$, we use for $B>1$ the fundamental decomposition $f_n(x,y) = f_{n,+} (x,y) + f_{n,0}(x,y) + f_{n,-} (x,y)$ where
\begin{align*}
		f_{n,+}(x,y) \coloneqq f_n(x,y) - \min( f_n(x,y), B) \ge 0, \quad f_{n,-}(x,y) &\coloneqq f_n(x,y) - \max(f_n(x,y), -B) \le 0 \\
		&\text{ and } f_{n,0}(x,y) \coloneqq \max( \min(f_n(x,y), B), -B).
\end{align*}
Then,
\begin{align}\begin{split}\label{UnboundedExponentialInequality1}
		\p\left( n^{-1} \left|\sum_{k=1}^n f_n(X_k,X_t) \right| > 3 \epsilon		\right) &\le \p\left( n^{-1} \left|\sum_{k=1}^n f_{n,+}(X_k,X_t) - \E{f_{n,+}(X_k,x)} |_{x=X_t} \right| >  \epsilon		\right)  \\
		&\quad + \p\left( n^{-1} \left|\sum_{k=1}^n f_{n,0}(X_k,X_t) - \E{f_{n,0}(X_k,x)} |_{x=X_t} \right| >  \epsilon		\right) \\
		&\quad + \p\left( n^{-1} \left|\sum_{k=1}^n f_{n,-}(X_k,X_t) - \E{f_{n,-}(X_k,x)} |_{x=X_t} \right| >  \epsilon		\right) .
		\end{split}
\end{align}
The asymptotic behavior of the second probability in \eqref{UnboundedExponentialInequality1} is given in Corollary~\ref{LargeDeviation}. The first and the third probability can be bounded with Markov's inequality and Proposition~\ref{BetaMixingIntegrability}, we only consider $f_{n,+}$ here:
\begin{align}
	&\p\left( n^{-1} \left|\sum_{k=1}^n f_{n,+}(X_k,X_t) - \E{f_{n,+}(X_0,x)} |_{x=X_t} \right| >  \epsilon		\right) \nonumber \\
	&\le (n \epsilon)^{-1} \sum_{k=1}^n \left| \E{ \left| f_{n,+}(X_k,X_t) - \E{f_{n,+}(X_0,x)} |_{x=X_t} \right| - \left| f_{n,+}(X_k,X'_0) - \E{f_{n,+}(X_0,x)} |_{x=X'_0} \right|	} \right| \nonumber \\
	&\quad + \epsilon^{-1} \E{ \left| f_{n,+}(X_0,X'_0) - \E{f_{n,+}(X_0,x)} |_{x=X'_0} \right|	} \nonumber \\
	&\le C (n\epsilon)^{-1} \sum_{k=1}^n \beta( |t-k| )^{1/q} \, \E{ \left| f_{n,+}(X_0,X'_0) - \E{f_{n,+}(X_0,x)} |_{x=X'_0} \right|^{p}	} ^{1/p} \nonumber \\
	&\quad + \epsilon^{-1} \E{ \left| f_{n,+}(X_0,X'_0) - \E{f_{n,+}(X_0,x)} |_{x=X'_0} \right|	}	\nonumber \\
	\begin{split}\label{UnboundedExponentialInequality2}
	&\le C	(n \epsilon)^{-1} \E{ |f_{n,+}(X_0,X'_0) |^p }^{1/p} + 2\epsilon^{-1} \E{ \left| f_{n,+}(X_0,X'_0) \right|},
	\end{split}
\end{align}
where $p,q > 1$ such that $p^{-1} + q^{-1} = 1$ as in Equation~\eqref{EqBetaMixingIntegrability} and where we use in the last inequality that the $\beta$-mixing coefficients are summable. We bound the two expectations given in \eqref{UnboundedExponentialInequality2} further. For the first expectation we obtain for $k>1$
\begin{align*}
		\E{ f_{n,+}(X_0,X'_0) } &= \E{ \int_B^{\infty} \p(f_n(X_0,y)>t)\intd{t} \Big|_{y=X'_0} } \le \E{ \int_B^{\infty} t^{-k} \intd{t} \; \E{f_n(X_0,y)^k} \Big|_{y=X'_0} } \\
		&\le (k-1)^{-1} B^{-(k-1)} \, \E{f_n(X_0,X'_0)^k} .
\end{align*}
The second expectation can be bounded with H{\"o}lder's inequality for $r^{-1}+u^{-1}=1,r,u>1$ and Markov's inequality:
\begin{align*}
		\E{ |f_{n,+}(X_0,X'_0) |^p } &= \E{ |f_n(X_0,X'_0)-B |^{pr} \1{f_n(X_0,X'_0)\ge B} }^{1/r} \p(f_n(X_0,X'_0)\ge B)^{1/u} \\
		&\le \E{ f_n(X_0,X'_0)^{pr} }^{1/r} B^{-k/u} \E{f_n(X_0,X'_0)^k}^{1/u},
\end{align*}
for $k>1$. This proves the claim.
\end{proof}

\section{An application in the nonparametric functional regression model}\label{Section_Application}
In this section, we embed the developed inequalities in the nonparametric functional kernel regression model of \cite{ferraty_nonparametric_2007}, \cite{delsol2009advances} and \cite{ferraty_validity_2010}. We cannot discuss all details of this model here due to its complexity and assume that the reader has some prior knowledge on this subject. Nevertheless, we describe all necessary assumptions: let $(\cH,\scalar{\cdot}{\cdot})$ be a separable Hilbert space over $\R$ where the norm $\norm{\,\cdot\,}$ is induced by the inner product $\scalar{\cdot}{\cdot}$. In typical applications the Hilbert space is given as a function space $L^2([0,1],\cB([0,1]),\intd{x})$ or more general $L^2(K,\cB(K),\nu)$ for a bounded and convex set $K\in\R^d$ and a finite measure $\nu$. Let $\{ (X_k,Y_k): k\in \Z\} \subseteq \cH \times \R$ be a stationary process which is $\beta$-mixing with exponentially decaying coefficients. The process fulfills the regression equation
\begin{align}\label{EqRegression1}
	Y_k = \psi (X_k) + \epsilon_k, \quad k\in\Z.
	\end{align}
The regression function $\psi\colon \cH \rightarrow \R$ is Lipschitz-continuous, i.e., $| \psi(x) - \psi(y) | \le L \norm{ x-y}$ for all $x,y \in \cH$. The error terms fulfill $\E{ \epsilon_k | X_k} =0$ and $\E{\epsilon_k^2| X_k}< \infty$. Given an observed sample $(X_1,Y_1),\ldots,(X_n,Y_n)$ the estimator of $\psi$ is
\begin{align}\label{EqRegression2}
	\hat{\psi} (x) = \frac{ \sum_{k=1}^n Y_k K( \norm{ X_k-x}/h) }{\sum_{k=1}^n  K( \norm{ X_k-x}/h) },
	\end{align}
where $h>0$ is the bandwidth and $K$ is a kernel function. The kernel function is supported in $[0,1]$ and zero otherwise. It admits a continuous derivative $K'$ on $[0,1)$ such that $K'\le 0$ and $K(1) >0$ as in \cite{ferraty_nonparametric_2007}. Moreover, we define the quantities
\begin{align*}
		 \hat{g}_h (x) \coloneqq (n F_x(h) ) ^{-1} \sum_{k=1}^n Y_k K( \norm{ X_k-x}/h) \quad \text{ and }\quad
		\hat{f}_h (x) \coloneqq  (n F_x(h) ) ^{-1} \sum_{k=1}^n  K( \norm{ X_k-x}/h) ,
\end{align*}
where $F_x(h) = \p( \norm{ X_0 - x} \le h)$. Note that we do not normalize $\hat{f}_h$ and $\hat{g}_h$ by a division with the bandwidth instead we multiply the inverse of the small ball probability $F_x(h)$.  Here we assume that $F_x(h) > 0$ for $h>0$ and that $F_x(0) =0$ for all $x\in \cH$. We choose the bandwidth $h$ as a function of $n$ such that for $h\rightarrow 0$ as $n\rightarrow \infty$ the summability condition
\begin{align}\label{EqRegression3}
		\sum_{n=1}^{\infty} n^{-2} \E{ F_{X_0}(h)^{-2} } (\log n)^2 (\log \log n)^2 < \infty
\end{align}
is fulfilled where we assume that $\E{ F_{X_0}(h)^{-2} }$ is finite for each $h>0$. E.g. we can choose $h\rightarrow 0$ such that $n^{-2} \E{ F_{X_0}(h)^{-2} } = o\left(n^{-(1+\delta)} \right)$ for some $\delta\in(0,1)$. Note that for the pointwise convergence $\hat{\psi}(x) \rightarrow \psi(x)$ \cite{ferraty_nonparametric_2007} require $n^{-1} F_x(h)^{-1} \rightarrow 0$.\\
Assume that there is a small ball probability function $\tau: [0,1] \rightarrow [0,1]$ which the uniform limit of the fractions $F_x(hs)/F_x(h)$, i.e.,
\begin{align}\label{EqSmallBall}
	\lim_{h\downarrow 0} \sup_{x\in \cH} \left| \frac{ F_x(hs)}{F_x(h)} - \tau(s)		\right| = 0.
 \end{align}
Further, set $	M = K(1) - \int_0^1 K'(s) \tau (s) \intd{s}	$ which is assumed to be positive.\\
We shortly discuss the issue of the convergence of $F_x(hs)/F_x(h)$ for $h \downarrow 0$ and $s\in(0,1)$, a more detailed introduction to this topic offer \cite{ferraty_nonparametric_2007} and the references therein. If the underlying Hilbert space is a function space, then one has in many applications that for a point $x$ in the Hilbert space $\p( \norm{ X_0 - x} < h ) \sim C(x) \p( \norm{X_0} < h)$ for $h \downarrow 0$. Hence the small ball problem at the point $x$ shifts to the origin and in the limit the quotient $F_x(hs)/F_x(h)$ becomes independent of the point $x$. This motivates assumption \eqref{EqSmallBall}.\\
We study the dynamic forecast $\hat{\psi} (X_t)$ for an observation $X_t$, this means, we give sufficient conditions that the difference $\hat{\psi}(X_t)-\psi(X_t)$ converges to zero $a.s.$ We can use Theorem~\ref{LaplaceTransform} and Theorem~\ref{UnboundedExponentialInequality} for this problem and do not need to consider the difference $\hat{\psi}-\psi$ pointwise for $x\in\cH$. We have the following theorem

\begin{theorem}\label{DynamicForecast}
Let $(X,\epsilon) = \{(X_k,\epsilon_k):k\in\Z\} \subseteq \cH \times \R$ be a stochastic process as in Theorem~\ref{UnboundedExponentialInequality}. Let $\{\epsilon_k: k \in \Z\}$ be a sequence of innovations such that the regression model~\eqref{EqRegression1} is fulfilled. Let \eqref{EqRegression3} be satisfied for $h\rightarrow 0$ as $n\rightarrow\infty$. 
Then for a component $X_t$ of the process ($t\in\Z$) it is true that $\hat{\psi}(X_t) \rightarrow \psi(X_t)$ a.s as $n\rightarrow \infty$.
\end{theorem}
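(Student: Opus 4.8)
The plan is to exploit the ratio structure of the estimator~\eqref{EqRegression2}. With $\hat\psi(x)=\hat{g}_h(x)/\hat{f}_h(x)$ one has
\begin{align*}
\hat\psi(X_t)-\psi(X_t)=\frac{\hat{g}_h(X_t)-\psi(X_t)\,\hat{f}_h(X_t)}{\hat{f}_h(X_t)},
\end{align*}
so it is enough to show the two almost sure limits $\hat{f}_h(X_t)\to M$ and $\hat{g}_h(X_t)-\psi(X_t)\,\hat{f}_h(X_t)\to 0$: since $M>0$ this makes $\hat{f}_h(X_t)$ eventually bounded away from zero (so $\hat\psi(X_t)$ is a.s.\ well defined for large $n$) and forces the quotient to tend to $0$ a.s. Plugging in the regression equation~\eqref{EqRegression1}, I would split the numerator as
\begin{align*}
\hat{g}_h(X_t)-\psi(X_t)\,\hat{f}_h(X_t)=\underbrace{\frac{1}{nF_{X_t}(h)}\sum_{k=1}^n\bigl(\psi(X_k)-\psi(X_t)\bigr)K(\norm{X_k-X_t}/h)}_{=:R_n}+\underbrace{\frac{1}{nF_{X_t}(h)}\sum_{k=1}^n\epsilon_k\,K(\norm{X_k-X_t}/h)}_{=:S_n}.
\end{align*}
The bias part $R_n$ is elementary: $K(\norm{X_k-X_t}/h)\neq 0$ forces $\norm{X_k-X_t}\le h$, so Lipschitz continuity of $\psi$ and $K\ge 0$ give $|R_n|\le Lh\,\hat{f}_h(X_t)$, which is $O(h)\to 0$ a.s.\ once $\hat{f}_h(X_t)\to M$ is known.

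For the denominator I would write $\hat{f}_h(X_t)=n^{-1}\sum_{k=1}^n f_n(X_k,X_t)+b_n(X_t)$ with $f_n(x,y)=F_y(h)^{-1}K(\norm{x-y}/h)-b_n(y)$ and $b_n(y)=\E{F_y(h)^{-1}K(\norm{X_0-y}/h)}$, so that $\E{f_n(X_0,y)}=0$ for $\p_{X_0}$-almost all $y$. From $K(s)=K(1)-\int_s^1 K'(u)\intd{u}$ on $[0,1]$ and Fubini one computes $b_n(y)=K(1)-\int_0^1 K'(u)\,F_y(hu)/F_y(h)\intd{u}$; since $0\le F_y(hu)/F_y(h)\le 1$ and $K'$ is integrable on $[0,1]$, the uniform small ball condition~\eqref{EqSmallBall} and dominated convergence give $\sup_{y\in\cH}|b_n(y)-M|\to 0$, hence $b_n(X_t)\to M$ surely. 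It remains to prove $n^{-1}\sum_{k=1}^n f_n(X_k,X_t)\to 0$ a.s. Since $f_n$ carries the factor $F_y(h)^{-1}$ and is not uniformly bounded, Theorem~\ref{UnboundedExponentialInequality} (applied to $X$, which inherits its hypotheses) is the right instrument rather than Corollary~\ref{LargeDeviation}. Conditioning on the independent copy $X_0'$ and using $\E{\1{\norm{X_0-y}\le h}}=F_y(h)$ yields the moment bound $\E{|f_n(X_0,X_0')|^k}\le C\bigl(\E{F_{X_0}(h)^{-(k-1)}}+1\bigr)$ for all $k\ge 1$; substituting this into~\eqref{EqUnboundedExponentialInequality0} and choosing a truncation level $B=B_n\uparrow\infty$ together with H{\"o}lder and moment parameters $k,p,r,u$ so that the three terms of~\eqref{EqUnboundedExponentialInequality0} form a convergent series in $n$ --- this is where~\eqref{EqRegression3} is consumed --- a Borel--Cantelli argument gives the claim.

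The variance part $S_n$ is handled in the same way, now applying Theorem~\ref{UnboundedExponentialInequality} to the $\beta$-mixing process $\{(X_k,\epsilon_k):k\in\Z\}$ and the functions $\widetilde f_n\bigl((x,e),(y,e')\bigr)=e\,F_y(h)^{-1}K(\norm{x-y}/h)$. These are centered in the required sense because, by~\eqref{EqRegression1},
\begin{align*}
\E{\epsilon_0\,K(\norm{X_0-y}/h)}=\E{\E{\epsilon_0|X_0}\,K(\norm{X_0-y}/h)}=0
\end{align*}
for $\p_{X_0}$-almost all $y$, and the moment requirements of Theorem~\ref{UnboundedExponentialInequality} follow by conditioning on the independent copy $(X_0',\epsilon_0')$ and then on $X_0$, using the conditional moments $\E{\epsilon_0^2|X_0}$ and, once more, $\E{\1{\norm{X_0-y}\le h}}=F_y(h)$. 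Choosing $B_n$ and the parameters as before and invoking~\eqref{EqRegression3} makes the resulting tail bound summable, so Borel--Cantelli yields $S_n\to 0$ a.s. Combining the limits for $\hat{f}_h(X_t)$, $R_n$ and $S_n$ through the ratio identity then finishes the proof.

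I expect the main obstacle to be the bookkeeping in the two Borel--Cantelli steps: one must trade off the truncation level $B_n$ so that the exponential contribution $a_1\epsilon^{-1}\exp\bigl(-a_2\,\epsilon n/(B_n\log n\log\log n)\bigr)$ of~\eqref{EqUnboundedExponentialInequality0} stays summable while, after inserting the moment bounds, the two remaining polynomial-order terms also stay summable; the factor $\E{F_{X_0}(h)^{-2}}(\log n)^2(\log\log n)^2$ in~\eqref{EqRegression3} is precisely what this balancing requires, with the free parameters $k,p,r,u$ supplying the needed slack. A secondary but real technical point is the uniform-in-$y$ convergence $b_n(y)\to M$, which relies on~\eqref{EqSmallBall} and on integrability of $K'$ near $1$.
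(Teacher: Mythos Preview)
Your proposal is correct and follows essentially the same route as the paper: Theorem~\ref{UnboundedExponentialInequality} plus Borel--Cantelli (via~\eqref{EqRegression3}) for the stochastic fluctuations, the uniform small-ball condition~\eqref{EqSmallBall} for $b_n\to M$, and Lipschitz continuity for the $\psi$-bias. The only differences are cosmetic---the paper treats $\hat g_h(X_t)\to M\psi(X_t)$ as a whole rather than splitting the numerator into your $R_n+S_n$---and that the paper commits to the explicit parameter choices $B=n/\bigl((\log n)^2(\log\log n)^2\bigr)$, $p=3/2$, $k=3$, $r=u=2$, which is precisely the bookkeeping you correctly anticipate as the main obstacle.
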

\begin{proof}[Proof of Theorem~\ref{DynamicForecast}]
We show that under the conditions we have both $\hat{f}_h (X_t) \rightarrow M$ $a.s.$ and $\hat{g}_h(X_t) \rightarrow \psi(X_t) M$ $a.s.$ We begin with $\hat{f}_h(X_t)$. We have that $\left|\hat{f}_h (X_t) - \E{ \hat{f}_h (x) }\Big|_{x=X_t} \right| \rightarrow 0$ $a.s.$ Indeed, let $\epsilon > 0$ be arbitrary but fixed, we apply Equation~\eqref{EqUnboundedExponentialInequality0} from Theorem~\ref{UnboundedExponentialInequality} with the following parameters
$$
		B = \frac{n}{ (\log n)^2 \, (\log \log n)^2 },\quad p=3/2,\quad k=3 \quad \text{and}\quad u=r=2.
$$
Note that $f_n(X_0,X'_0)$ corresponds to the function $K\left(\norm{X_0-X'_0}/h \right)/F_{X'_0}(h)$ in this case. An application of the theorem of Fubini-Tonelli yields
\begin{align*}
		\E{ f_n(X_0,X'_0)^{k} } &= \E{ \left(\frac{ K\left(\norm{X_0-X'_0}/h\right)}{F_{X'_0}(h)}\right)^{3} } \\
		&= \E{ \left(F_{X'_0}(h)\right)^{-2} \cdot \left( K(1)^3 - \int_0^1 (K(s)^3)' \frac{ F_{X'_0}(hs)}{F_{X'_0}(h)} \intd{s} \right) } = O\left( \E{ F_{X'_0}(h)^{-2} } \right)
\end{align*}
by the uniform convergence of the small probability from \eqref{EqSmallBall}. Hence, we obtain for some constants $a_1,a_2\in\R_+$ that
\begin{align}\begin{split}\label{EqRegression4}
		\p\left( \left| \hat{f}_h(X_t) - \E{ \hat{f}_h (x) } \Big|_{x=X_t} \right| \ge \epsilon \right) &\le  a_1 \epsilon^{-1} \exp\left( -a_2 \log n \cdot \log\log n		\right) \\
		&\quad+  a_1\epsilon^{-1} (\log n)^2 (\log \log n)^2 n^{-2} \E{F_{X'_0}(h)^{-2} }.
\end{split}\end{align}
Consequently, the probabilities from the left hand side of \eqref{EqRegression4} are summable over $n\in\N_+$ for $\epsilon>0$ fixed because of the choice of the bandwidth from \eqref{EqRegression3}. Thus, the claim that $\hat{f}_h(X_t) - \E{\hat{f}_h (x)}\Big|_{x=X_t} \rightarrow 0$ $a.s.$ follows with an application of the first Borel-Cantelli Lemma. An application of the theorem of Fubini-Tonelli yields
\begin{align}
		\sup_{x\in \cH} \left| \E{ \hat{f}_h (x) } - M \right| &= \sup_{x\in \cH} \left| \E{ \frac{K( \norm{ X_0 - x}/h )}{ F_x(h)} } - M \right| \nonumber \\
		&\le \sup_{x\in \cH} \int_0^1 K'(s) \left| \frac{ F_x(hs)}{F_x(h)} - \tau(s)		\right| \intd{s} \rightarrow 0. \nonumber
\end{align}
The last inequality follows similarly as in \cite{ferraty_nonparametric_2007} and from the requirement \eqref{EqSmallBall}. This proves that $\hat{f}_h(X_t) \rightarrow M>0$ $a.s.$ Consider next $\hat{g}_h(X_t)$. Once more, we have $\left|\hat{g}_h (X_t) - \E{ \hat{g}_h (x) }\Big|_{x=X_t}  \right| \rightarrow 0$ $a.s.$ using the requirement \eqref{EqRegression3}. Furthermore, we obtain for a point $x\in\cH$ with the assumption that the regression function $\psi$ is Lipschitz continuous and that the conditional expectation of the innovations is zero
\begin{align}
		\left| \E{ \hat{g}_h (x) }- M \psi(x) \right| &\le \left| \E{ \left( \psi(X_0)-\psi(x) \right) \frac{K( \norm{ X_0 - x}/h )}{ F_x(h)} } \right| + |\psi(x)|\left| \E{ \frac{K( \norm{ X_0 - x}/h )}{ F_x(h)} } - M \right| \nonumber \\
		&\le Lh  M + (|\psi(x) |+o(1)) \left| \E{ \frac{K( \norm{ X_0 - x}/h )}{ F_x(h)} }- M \right|. \nonumber 
\end{align}
This ensures that $\left|\E{\hat{g}_h (x) }\big|_{x=X_t} - M \psi(X_t)\right|\rightarrow 0$ $a.s.$ and proves the second statement $\hat{g}_h(X_t) \rightarrow \psi(X_t) M$ $a.s.$\\
All in all, we have $\hat{\psi} (X_t) - \psi(X_t) = \hat{g}_h(X_t) /\hat{f}_h(X_t) - \psi(X_t)  \rightarrow  0$ $a.s.$ by the continuous mapping theorem.
\end{proof}

\appendix
\section{Appendix}
The following statement can be seen as a multivariate generalization of \cite{davydov1968convergence} in a special case:
\begin{proposition}[\cite{ibragimov1962some}]\label{IbragimovAlphaMixing}
Let $Z_1,\ldots,Z_n$ be real-valued non-negative random variables each $a.s.$ bounded. Denote by $\alpha \coloneqq \sup_{k\in \{1,\ldots,n\} } \alpha\left( \sigma( Z_i: i \le k), \sigma( Z_i: i > k) \right)$. Then
$$ \left| \E{ \prod_{i=1}^n Z_i } - \prod_{i=1}^n \E{Z_i} \right| \le (n-1) \, \alpha\, \prod_{i=1}^n \norm{Z_i}_{\infty}.$$
\end{proposition}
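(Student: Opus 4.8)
The plan is to reduce the statement to a sharp covariance estimate for a pair of non-negative bounded random variables and then to telescope. First I would record the auxiliary fact: if $U$ is $\cF$-measurable, $V$ is $\cG$-measurable, and both are non-negative with $\norm{U}_\infty,\norm{V}_\infty<\infty$, then $|\E{UV}-\E{U}\E{V}|\le\alpha(\cF,\cG)\norm{U}_\infty\norm{V}_\infty$. This follows from the layer-cake representations $U=\int_0^{\norm{U}_\infty}\1{U>t}\intd{t}$ and $V=\int_0^{\norm{V}_\infty}\1{V>s}\intd{s}$: by Fubini the covariance equals $\int_0^{\norm{U}_\infty}\int_0^{\norm{V}_\infty}\big(\p(U>t,V>s)-\p(U>t)\p(V>s)\big)\intd{s}\intd{t}$, each integrand is at most $\alpha(\cF,\cG)$ in modulus because $\{U>t\}\in\cF$ and $\{V>s\}\in\cG$, and the domain of integration has Lebesgue measure $\norm{U}_\infty\norm{V}_\infty$.

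Then I would prove the proposition by induction on $n$, the case $n=1$ being trivial. For $n\ge2$, split
\[ \E{\prod_{i=1}^n Z_i} - \prod_{i=1}^n \E{Z_i} = \left( \E{\prod_{i=1}^{n-1}Z_i \cdot Z_n} - \E{\prod_{i=1}^{n-1}Z_i}\E{Z_n} \right) + \left( \E{\prod_{i=1}^{n-1}Z_i} - \prod_{i=1}^{n-1}\E{Z_i} \right)\E{Z_n}. \]
The variable $\prod_{i=1}^{n-1}Z_i$ is $\sigma(Z_i:i\le n-1)$-measurable, non-negative, and bounded by $\prod_{i=1}^{n-1}\norm{Z_i}_\infty$, while $Z_n$ is $\sigma(Z_i:i>n-1)$-measurable; so by the auxiliary fact and the obvious monotonicity of $\alpha(\cdot,\cdot)$ in its two $\sigma$-algebras, the first bracket is at most $\alpha\prod_{i=1}^n\norm{Z_i}_\infty$. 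For the second bracket, the mixing coefficient attached to the shorter family $Z_1,\dots,Z_{n-1}$ is dominated by $\alpha$ (shrinking the right-hand $\sigma$-algebra only decreases the coefficient), so the induction hypothesis bounds it by $(n-2)\alpha\prod_{i=1}^{n-1}\norm{Z_i}_\infty$; multiplying by $\E{Z_n}\le\norm{Z_n}_\infty$ and summing the two contributions yields exactly $(n-1)\alpha\prod_{i=1}^n\norm{Z_i}_\infty$.

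The only genuinely delicate point is pinning the constant down to $(n-1)\alpha$ rather than a multiple such as $4(n-1)\alpha$: this forces me to use the sharp covariance bound above, which is specific to non-negative variables and the layer-cake trick, instead of the generic Davydov-type inequality $|\mathrm{Cov}(U,V)|\le 4\alpha\norm{U}_\infty\norm{V}_\infty$. Beyond that, the only care needed is to identify, at each inductive step, the two $\sigma$-algebras on which the covariance lives and to invoke monotonicity so that every increment is controlled by the single number $\alpha$ appearing in the statement.
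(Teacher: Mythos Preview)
The paper does not supply its own proof of this proposition; it is stated in the appendix with a citation to \cite{ibragimov1962some} and nothing more. So there is no proof in the paper to compare against.

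Your argument is correct. The layer-cake identity gives the sharp covariance bound $|\E{UV}-\E{U}\E{V}|\le\alpha(\cF,\cG)\norm{U}_\infty\norm{V}_\infty$ for non-negative bounded $U,V$, and the telescoping induction then yields exactly the constant $(n-1)\alpha$. The two places where a careless reader might stumble --- applying Fubini to pull the covariance inside the double integral, and checking that the mixing coefficient for the shorter family $Z_1,\dots,Z_{n-1}$ is dominated by the $\alpha$ defined for the full family --- are both handled correctly: the first is immediate since everything is bounded and non-negative, and the second is just the monotonicity of $\alpha(\cF,\cG)$ in each argument. Your remark that non-negativity is essential for the constant $1$ (rather than $4$) in the base covariance estimate is exactly the point; for signed bounded variables one would split into positive and negative parts and pick up the factor $4$, which is how the usual Davydov bound arises.
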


\section*{Acknowledgments}
The author is very grateful to a referee for thoughtful suggestions and comments which significantly improved and clarified the manuscript.\\

\end{document}